\renewcommand{\theenumi}{{\upshape (\@roman\c@enumi)}}
\renewcommand{\p@enumii}{}
\renewcommand{\theenumii}{{\upshape (\@alph\c@enumii)}}
\DeclarePairedDelimiter{\abs}{\lvert}{\rvert}
\crefname{equation}{}{} 
\numberwithin{equation}{section} 
\newtheorem{theoremcounter}{}[section] 
\newtheorem{dfn}[theoremcounter]{Definition}
\crefname{dfn}{Definition}{Definitions} 
\newtheorem{dfn*}[appendixcounter]{Definition}
\crefname{dfn*}{Definition}{Definitions} 
\newtheorem{thm}[theoremcounter]{Theorem}
\crefname{thm}{Theorem}{Theorems}
\newtheorem{thm*}[appendixcounter]{Theorem}
\crefname{thm*}{Theorem}{Theorems} 
\newtheorem{lem}[theoremcounter]{Lemma}
\crefname{lem}{Lemma}{Lemmas}
\newtheorem{prop}[theoremcounter]{Proposition}
\crefname{prop}{Proposition}{Propositions}
\newtheorem{coro}[theoremcounter]{Corollary}
\crefname{coro}{Corollary}{Corollaries}
\crefname{conj}{Conjecture}{Conjectures}
\newtheorem{rmk}[theoremcounter]{Remark}
\crefname{rmk}{Remark}{Remarks}
\newtheorem{ex}[theoremcounter]{Example}
\crefname{ex}{Example}{Examples}
\crefname{fig}{Figure}{Figures} 
\crefname{enumi}{}{}
\crefname{enumii}{}{}
\crefname{enumiii}{}{}
\crefname{enumiiii}{}{}
\newcommand{\HGF}[5]
	{\,_{#1}F_{#2}\left(\left.
		\begin{matrix}
			{#3}\\
			{#4}
		\end{matrix}\,\right|{#5}\right)
	}
\renewcommand{\d}{\mathrm{d}}
\newcommand{\CC}{\mathbb{C}}
\newcommand{\RR}{\mathbb{R}}
\newcommand{\QQ}{\mathbb{Q}}
\newcommand{\ZZ}{\mathbb{Z}}
\newcommand{\PP}{\mathbb{P}}
\begin{document}

	\title[]{Hurwitz–Lerch type central binomial series} 

	\author{Karin Ikeda} 
	\address{Joint Graduate School of Mathematics for Innovation, 
						Kyushu University, Motooka 744, Nishi-ku, Fukuoka 819-0395, Japan
					}
	\email{ikeda.karin.236@s.kyushu-u.ac.jp}

	\author{Yuta Kadono} 
	\address{Mathematical Institute, Tohoku University, 6-3, 
						Aramaki Aza-Aoba, Aoba-Ku, Sendai 980-8578, Japan.
					}
	\email{kadono.yuta.s6@dc.tohoku.ac.jp}

	\subjclass[2020]{11B68, 33C20}

	\maketitle

	\begin{abstract}
		The central binomial series is a subject that has been extensively studied, for example in the context of the 
		irrationality of Riemann zeta values. In this paper, the Hurwitz version of the central binomial series is defined by 
		adding one real parameter, and its values at integer points are studied.
	\end{abstract}

	\section{Introduction}

	The main an object of this paper is the \emph{Hurwitz type central binomial series} (HCBS) $\zeta_{HCB}(s,a)$, given as:
		\begin{align}
			\zeta_{HCB}(s,a)\coloneqq\sum_{n=0}^{\infty}\frac{1}{\binom{2(n+a)}{n+a}(n+a)^{s}}
		\end{align}
	for any complex numbers $s$ and real numbers $a\notin{\ZZ_{<1}}$. 
	The special case 
		\begin{align}
			\zeta_{CB}(s)\coloneqq\zeta_{HCB}(s,1)=\sum_{n=1}^{\infty}\frac{1}{\binom{2n}{n}n^{s}}
		\end{align}
	is classically known as the \emph{central binomial series} (CBS, see e.g., \cite{L,BBK}). 

	There have been many studies of the values of $\zeta_{CB}(s)$ at integer points $s=k\in{\ZZ}$, on which we recall the 
	following three results.
	First, Lehmer extended $\zeta_{CB}(1)$ to a one-variable function and found a connection with the arcsine function:
		\begin{thm}[Lehmer\cite{L}]\label{thm:Lehmer1}
			If $\abs{z}<1$, then
				\begin{align}
					\frac{2z\,\arcsin(z)}{\sqrt{1-z^{2}}}
						=\sum_{n=1}^{\infty}\frac{(2z)^{2n}}{\binom{2n}{n}n}. \label{eq:leh1}
				\end{align}
		\end{thm}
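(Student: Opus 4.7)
The plan is to reduce the identity to the classical power series expansion of $(\arcsin z)^{2}$. Observe that the chain rule gives
\[
\frac{d}{dz}\bigl[(\arcsin z)^{2}\bigr] = \frac{2\arcsin z}{\sqrt{1-z^{2}}},
\]
so the left-hand side of \eqref{eq:leh1} is precisely $z$ times this derivative. It therefore suffices to establish
\[
(\arcsin z)^{2} = \frac{1}{2}\sum_{n=1}^{\infty}\frac{(2z)^{2n}}{n^{2}\binom{2n}{n}} \qquad (|z|<1),
\]
after which \eqref{eq:leh1} follows by differentiating termwise and multiplying by $z$.

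To obtain this expansion, I would set $f(z) := (\arcsin z)^{2}$ and derive the linear ODE it satisfies. Starting from $\sqrt{1-z^{2}}\,f'(z) = 2\arcsin z$ and differentiating once more gives
\[
(1-z^{2})\, f''(z) - z\, f'(z) = 2,
\]
with initial conditions $f(0) = f'(0) = 0$. Since $f$ is even and vanishes at the origin, I expand $f(z) = \sum_{n \geq 1} a_{2n} z^{2n}$; substituting into the ODE and equating coefficients yields $a_{2} = 1$ together with the recurrence
\[
(2n+2)(2n+1)\, a_{2n+2} = (2n)^{2}\, a_{2n}.
\]
A short induction, or equivalently a telescoping product, then confirms $a_{2n} = 2^{2n-1}/\bigl(n^{2}\binom{2n}{n}\bigr)$, which is exactly the coefficient predicted by the target identity.

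The main technical step is bookkeeping: verifying that the ratio $a_{2n+2}/a_{2n}$ from the recurrence agrees with the ratio of the candidate closed forms, which reduces to the elementary identity $\binom{2n+2}{n+1}/\binom{2n}{n} = 2(2n+1)/(n+1)$. Termwise differentiation of the resulting series is justified on $|z|<1$ because the asymptotic $\binom{2n}{n} \sim 4^{n}/\sqrt{\pi n}$ forces the right-hand side of \eqref{eq:leh1} to converge absolutely and uniformly on compact subsets of the open unit disc. I do not anticipate any conceptual obstacle here; the essence of the argument lies entirely in the ODE derivation and the identification of its power series solution, and an alternative route via the Beta-function representation $1/\bigl(n\binom{2n}{n}\bigr) = \int_{0}^{1} t^{n-1}(1-t)^{n}\, dt$ followed by a trigonometric substitution gives the same identity more directly should the ODE route prove inconvenient.
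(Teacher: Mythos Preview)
Your argument is correct. The ODE $(1-z^{2})f''-zf'=2$ for $f=(\arcsin z)^{2}$ is derived properly, the recurrence $(2n+2)(2n+1)a_{2n+2}=(2n)^{2}a_{2n}$ follows from equating coefficients, and the closed form $a_{2n}=2^{2n-1}/\bigl(n^{2}\binom{2n}{n}\bigr)$ satisfies this recurrence via the binomial ratio you quote. Termwise differentiation then yields \eqref{eq:leh1} exactly as you describe.

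The paper does not prove this statement directly in the introduction, but it recovers it later as the special case $a=1$ of \cref{choukika2}. That route is rather different from yours: the sum is first rewritten as the hypergeometric series $\dfrac{4^{a}z^{2a}}{a\binom{2a}{a}}\,{}_{2}F_{1}\!\bigl(1,a;a+\tfrac{1}{2};z^{2}\bigr)$, Euler's transformation is applied to produce the factor $(1-z^{2})^{-1/2}$ together with ${}_{2}F_{1}\!\bigl(\tfrac{1}{2},a-\tfrac{1}{2};a+\tfrac{1}{2};z^{2}\bigr)$, and finally at $a=1$ one recognises ${}_{2}F_{1}\!\bigl(\tfrac{1}{2},\tfrac{1}{2};\tfrac{3}{2};z^{2}\bigr)=\arcsin(z)/z$. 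Your approach is more elementary and self-contained---no hypergeometric identities are invoked---whereas the paper's approach has the advantage of sitting inside a one-parameter family, so that the same computation simultaneously handles all real $a$ and feeds directly into the later results on $\Phi_{HCB}(1-k,a,z)$.
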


	Lehmer introduced the polynomials $p_{k}(x)$ and $q_{k}(x)$ (for $k\ge -1$) using the recursion
		\begin{align}
			p_{k+1}(x)&=2(kx+1)p_{k}(x)+2x(1-x)p_{k}'(x)+q_{k}(x),\\
			q_{k+1}(x)&=(2(k+1)x+1)q_{k}(x)+2x(1-x)q_{k}'(x),
		\end{align}
	with the initial values $p_{-1}(x)=0$ and $q_{-1}(x)=1$ to study the special values of CBS 
	(The first few polynomials are shown in \cref{table:ppaq} before \cref{thm:ptoE}).
	Then, through a successive differentiation, the following results for the values at negative integer points of the CBS 
	were obtained:
	\begin{thm}[Lehmer\cite{L}]\label{thm:Lehmer2} 
		For any integers $k\ge 0$, we have
			\begin{align}
				\sum_{n=1}^{\infty}\frac{(2n)^{k-1}(2z)^{2n}}{\binom{2n}{n}}
					=\frac{z}{(1-z^{2})^{k+\frac{1}{2}}}
						\left(z\sqrt{1-z^{2}}\ p_{k-1}(z^{2})+\arcsin(z)\,q_{k-1}(z^{2})\right).
			\end{align}
		In particular, when $z=1/2$, the following equality holds:
			\begin{align}
				\zeta_{CB}(1-k)
					=\left(\frac{2}{3}\right)^{k}
						\left(\frac{1}{2}\,p_{k-1}\left(\frac{1}{4}\right)+\frac{\pi}{3\sqrt{3}}\,q_{k-1}
						\left(\frac{1}{4}\right)\right)\in{\QQ+\QQ\frac{\pi}{\sqrt{3}}}.
			\end{align}
	\end{thm}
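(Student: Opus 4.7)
The plan is to prove the identity by induction on $k\geq 0$, using \cref{thm:Lehmer1} as the base case and the operator $D := z\frac{d}{dz}$ to step from $k$ to $k+1$. Set
\begin{align*}
f_k(z) := \sum_{n=1}^{\infty}\frac{(2n)^{k-1}(2z)^{2n}}{\binom{2n}{n}}.
\end{align*}
Since $D[(2z)^{2n}]=2n(2z)^{2n}$, termwise differentiation (justified for $\abs{z}<1$) gives $f_{k+1}=Df_k$, so the whole family is generated from $f_0$. The case $k=0$ is immediate from \cref{thm:Lehmer1}, which yields $f_0(z)=\tfrac{z\arcsin(z)}{\sqrt{1-z^2}}$, matching the claimed formula with $p_{-1}=0$, $q_{-1}=1$.

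For the inductive step, decompose the claimed right-hand side into an algebraic part and an $\arcsin$-part,
\begin{align*}
f_k(z) = R_k(z) + \arcsin(z)\,S_k(z), \quad R_k(z):=\frac{z^2\,p_{k-1}(z^2)}{(1-z^2)^k},\ \ S_k(z):=\frac{z\,q_{k-1}(z^2)}{(1-z^2)^{k+1/2}},
\end{align*}
and apply $D$, using $\arcsin'(z)=1/\sqrt{1-z^2}$:
\begin{align*}
Df_k(z) = zR_k'(z) + \frac{zS_k(z)}{\sqrt{1-z^2}} + \arcsin(z)\cdot zS_k'(z).
\end{align*}
Matching the coefficient of $\arcsin(z)$ against the claimed formula at level $k+1$ reduces, after setting $x=z^2$ and clearing $(1-z^2)^{k+3/2}$, to the polynomial identity $q_k(x)=(2kx+1)\,q_{k-1}(x)+2x(1-x)\,q_{k-1}'(x)$, which is exactly the given recursion for $q_k$ (with index shift). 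The non-$\arcsin$ part $zR_k'(z) + \frac{zS_k(z)}{\sqrt{1-z^2}}$, once put over the common denominator $(1-z^2)^{k+1}$ and with a factor $z^2$ extracted, reduces to $p_k(x)=2\bigl((k-1)x+1\bigr)p_{k-1}(x)+2x(1-x)\,p_{k-1}'(x)+q_{k-1}(x)$, again the given recursion for $p_k$. Thus both halves of the decomposition propagate correctly.

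The specialization at $z=1/2$ then follows from substituting $\arcsin(1/2)=\pi/6$, $\sqrt{1-1/4}=\sqrt{3}/2$, and $(2z)^{2n}=1$, which gives $\sum_{n\geq 1}(2n)^{k-1}/\binom{2n}{n}=2^{k-1}\zeta_{CB}(1-k)$; rearranging the constants yields the stated closed form, and since $p_{k-1}(1/4),q_{k-1}(1/4)\in\QQ$ by the polynomial recursion, the membership in $\QQ+\QQ\frac{\pi}{\sqrt{3}}$ is automatic. The main obstacle is purely bookkeeping in the inductive step: one must carry the half-integer exponent on $(1-z^2)^{k+1/2}$ together with the chain-rule factors $2z$ coming from $q_{k-1}'(z^2)$ and $p_{k-1}'(z^2)$, and verify that the coefficients $1+2kx$ and $2((k-1)x+1)$ emerge cleanly rather than with spurious extra terms — this is where a sign or a factor of two is easy to mishandle.
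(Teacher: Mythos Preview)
Your proof is correct and follows the same inductive scheme the paper uses for its generalisation, \cref{zenka}: apply the Euler operator $\theta_z=z\frac{\d}{\d z}$ repeatedly, use the base case coming from the $\arcsin$ identity (\cref{thm:Lehmer1}, which in the paper is subsumed by \cref{choukika2}), and check that the algebraic and transcendental pieces reproduce the recursions for $p_k$ and $q_k$. The paper does not prove \cref{thm:Lehmer2} separately but only cites it from Lehmer; its own argument for \cref{zenka} specialises at $a=1$ to exactly what you wrote.
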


The following results are known for the polynomials $p_{k}(x)$ and $q_{k}(x)$:
    \begin{thm}[B\'{e}nyi-Matsusaka\cite{BM}]\label{th:BM}
        For any $n\ge 0$, we have
            \begin{align}
                p_{n}(x)&=2^{n}\sum_{k=0}^{n}\binom{n+1}{k}E_{n-k}\left(x,\frac{1}{2}\right)E_{k}\left(x,\frac{1}{2}\right),\\
                q_{n-1}(x)&=2^{n}E_{n}\left(x,\frac{1}{2}\right),\label{eq:qE}
            \end{align}
        where $E_{n}(x,y)$ is the bivariate Eulerian polynomial defined by
            \begin{align}
                \sum_{n=0}^{\infty}E_{n}(x,y)\frac{t^{n}}{n!}
                    =\left(\frac{1-x}{e^{t(x-1)}-x}\right)^{y}\eqqcolon\mathscr{E}(x,y;t).
            \end{align}
        Furthermore, the special value $p_{n}(1/4)$ is connected to the poly-Bernoulli numbers:
            \begin{align}
                \left(\frac{2}{3}\right)^{n}p_{n}\left(\frac{1}{4}\right)
                    &=\sum_{k=0}^{n}B_{n-k}^{(-k)},\label{eq:BM1}
            \end{align}
        where $B_{n}^{(k)}\in{\QQ}$ is the poly-Bernoulli number defined by
            \begin{align}
                \sum_{n=0}^{\infty}B_{n}^{(k)}\frac{t^{n}}{n!}
                    =\sum_{m=1}^{\infty}\frac{(1-e^{-t})^{m-1}}{m^{k}},
            \end{align}
        for any integers $k$.
    \end{thm}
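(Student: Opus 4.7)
\emph{Proof of \eqref{eq:qE}.} I first derive a differential recursion for $E_n(x, 1/2)$ from the defining generating function $\mathscr{E}(x, 1/2; t) = (1-x)^{1/2} (e^{t(x-1)} - x)^{-1/2}$. Computing $\partial_t \log \mathscr{E}$ and $\partial_x \log \mathscr{E}$ and eliminating $e^{t(x-1)}$ via the relation $e^{t(x-1)} = x + (1-x)/\mathscr{E}(x,1/2;t)^2$ will yield the PDE
\begin{align*}
    (1 - tx)\, \partial_t \mathscr{E}(x,1/2;t) = x(1-x)\, \partial_x \mathscr{E}(x,1/2;t) + \frac{1}{2}\mathscr{E}(x,1/2;t),
\end{align*}
whose coefficient of $t^n/n!$ gives $2 E_{n+1}(x, 1/2) = (2nx+1) E_n(x, 1/2) + 2x(1-x) \partial_x E_n(x, 1/2)$. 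After scaling by $2^{n+1}$, this is exactly the recursion for $q_n$ satisfied by the ansatz $\tilde q_n := 2^{n+1} E_{n+1}(x, 1/2)$, and since $\tilde q_{-1} = E_0(x,1/2) = 1 = q_{-1}$, this forces $\tilde q_n = q_n$ for all $n \ge -1$.

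\emph{Proof of the $p_n$ formula.} Put $F(x, t) := \mathscr{E}(x, 1/2; 2t)$ and $G(x, t) := \int_0^t F(x, s)\, ds$. By the previous step, $F = \sum q_{n-1}(x) t^n/n!$, and rescaling the PDE above gives both the ODE $\partial_t F = (1-x) F + x F^3$ and the relation $(1 - 2xt) \partial_t F = F + 2x(1-x) \partial_x F$. A short binomial manipulation (extending the sum by the convention $E_{-1} = 0$ so that $\binom{n+1}{k}$ becomes a shifted convolution) identifies the EGF of the claimed RHS as $\partial_t(F \cdot G)$. On the other hand, the recursion for $p_n$ translates into the PDE $(1 - 2xt) \partial_t P = 2 P + 2x(1-x) \partial_x P + \partial_t F$ for $P(x, t) := \sum p_n(x) t^n/n!$. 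Substituting the candidate $P = F^2 + G\,\partial_t F$ and using $\partial_x G = \int_0^t \partial_x F\, ds$ together with the $F$-PDE, the verification reduces to checking that an auxiliary expression $H(x, t)$ built from $F$, $G$, and their derivatives satisfies $\partial_t H \equiv 0$ and $H(x, 0) = 0$; both follow immediately from the $F$-PDE.

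\emph{Proof of \eqref{eq:BM1}.} I specialize the previous step to $x = 1/4$. Setting $\Phi(t) := \mathscr{E}(1/4, 1/2; 4t/3) = \sqrt{3/(4 e^{-t} - 1)}$, the previous step yields $\sum_n (2/3)^n p_n(1/4)\,t^n/n! = \partial_t\bigl(\Phi(t) \cdot \int_0^t \Phi(s)\, ds\bigr)$. The substitution $\psi = \arcsin(e^{t/2}/2)$ turns $\Phi$ into $\sqrt{3}\tan\psi$ and gives $d\psi = \Phi/(2\sqrt{3})\, dt$, hence the closed form $\Phi(t)\cdot\int_0^t \Phi(s)\,ds = 6\psi\tan\psi - \pi\tan\psi$. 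Independently, Kaneko's bivariate EGF
\begin{align*}
    \mathcal{B}(x, y) := \sum_{n, k \ge 0} B_n^{(-k)} \frac{x^n y^k}{n!\, k!} = \frac{e^{x+y}}{e^x + e^y - e^{x+y}},
\end{align*}
together with the Beta integral $\int_0^t s^n (t-s)^k\, ds = t^{n+k+1} n!\, k!/(n+k+1)!$, shows $\partial_t \int_0^t \mathcal{B}(s, t-s)\, ds = \sum_n \bigl(\sum_{k=0}^n B_{n-k}^{(-k)}\bigr) t^n/n!$. Finally, computing $\int_0^t \mathcal{B}(s, t-s)\, ds$ by the symmetric substitution $u = e^{s - t/2}$ and completing the square in $u^2 - e^{t/2} u + 1$, the two resulting arctangent values collapse via the double-angle formula to $\psi$ and $\pi/2 - 2\psi$, giving $6\psi\tan\psi - \pi\tan\psi$, which matches the previous expression.

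The hardest part will be this last identification: bridging Kaneko's $\mathcal{B}(s, t-s)$, a rational function of $e^s$ and $e^{t-s}$, with the $\arcsin$-expression coming from $\Phi$. It is not routine, and hinges on choosing the symmetric substitution $u = e^{s - t/2}$ and on recognizing the double-angle collapse $\arctan((2 - e^t)/(e^{t/2}\sqrt{4 - e^t})) = \pi/2 - 2\psi$.
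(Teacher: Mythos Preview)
The paper does not prove this theorem. \Cref{th:BM} is stated in the introduction as a result of B\'enyi--Matsusaka \cite{BM} and is quoted only as background for the paper's own work on the Hurwitz-type polynomials $p_n(a,x)$; no argument for it appears anywhere in the text. There is therefore nothing in the paper to compare your proposal against.

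That said, your outline is a reasonable route to the result on its own terms. The derivation of \cref{eq:qE} from the PDE $(1-tx)\partial_t\mathscr{E}=x(1-x)\partial_x\mathscr{E}+\tfrac12\mathscr{E}$ is correct and matches the $q$-recursion after rescaling. For the $p_n$ formula, your identification of the exponential generating function of the right-hand side with $\partial_t(FG)=F^2+F'G$ is right (use $\binom{n+1}{k}=\binom{n}{k}+\binom{n}{k-1}$), and the translation of Lehmer's recursion into $(1-2xt)\partial_tP=2P+2x(1-x)\partial_xP+\partial_tF$ is accurate; the remaining ``auxiliary $H$'' verification is left vague but is a straightforward if tedious PDE check once one also uses $\partial_tF=(1-x)F+xF^3$. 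For \cref{eq:BM1}, invoking Kaneko's closed form $\mathcal{B}(x,y)=e^{x+y}/(e^x+e^y-e^{x+y})$ and the Beta-integral convolution is a clean way to get the antidiagonal EGF; the matching via the $u=e^{s-t/2}$ substitution and the double-angle collapse is believable, though that is the one step where you should actually write out the computation rather than narrate it.
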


    \begin{rmk}
        Note that bivariate Eulerian polynmomials $E_{n}(x,y)$ are classically defined as counting polynomials with respect to the permutations in the symmetric group $\mathfrak{S}_{n}$ (see e.g., B\'{e}nyi-Matsusaka\cite{BM}). Some examples are given below:
            \begin{align}
                E_{0}(x,y)&=1,\ \ E_{1}(x,y)=y,\ \ E_{2}(x,y)=y^{2}+xy,\\
                E_{3}(x,y)&=y^{3}+3xy+x^{2}y+xy.
            \end{align}
    \end{rmk}
    \begin{rmk}
        The identity \cref{eq:BM1} was observed experimentally by Stephan (see Kaneko\cite{K}).
        The first few examples of poly-Bernoulli numbers are given in the table below.
        \renewcommand{\arraystretch}{1.2}
            \begin{table}[H]
                \begin{center}
                    \begin{tabular}{|c||c|c|c|c|c|}\hline
                         $k\setminus n$ & $0$ & $1$ &  $2$ &  $3$  &  $4$  \\ \hhline{|=#=|=|=|=|=|}
                               $0$      & $1$ & $1$ & $1$ & $1$ & $1$  \\ \hline
                               $1$      & $1$ & $2$ & $4$ & $8$ & $16$  \\ \hline
                               $2$      & $1$ & $4$ & $14$ & $46$ & $146$ \\ \hline
                               $3$      & $1$ & $8$ & $46$ & $230$ & $1066$ \\ \hline
                               $4$      & $1$ & $16$ & $146$ & $1066$ & $6902$ \\ \hline
                    \end{tabular}
                    \caption{Poly-Bernoulli numbers $B_{n}^{(-k)}$.}
                \end{center}
            \end{table}
        For example, we compute from this table the right-hand side of \cref{eq:BM1} for $n=3$ as 
        \begin{align}
         B_{3}^{(0)}+B_{2}^{(-1)}+B_{1}^{(-2)}+B_{0}^{(-3)}=1+4+4+1=10.
        \end{align}
        On the other hand, we have $p_3(x)=20x^2+70x+15$ (\cref{table:ppaq}) and so
        \begin{align}
            \left(\frac{2}{3}\right)^3p_{3}\left(\frac{1}{4}\right)=\frac{8}{27}\left(\frac{5}{4}+\frac{35}{2}+15\right)=10,
        \end{align}
        thus \cref{eq:BM1} is verified in this case.
    \end{rmk}
On the other hand, Borwein, Broadhurst, and Kamnitzer showed that $\zeta_{CB}(k)$ for integers $k\ge2$ can be written as a rational linear combination of multiple zeta, Clausen and Glaisher values through log-sine integral (see\cite{BBK}).

The current paper studies the values of $\zeta_{HCB}(k,a)$ at integer points $s=k\in{\ZZ_{\le 1}}$.
For this purpose, \cref{sec:Def} introduces the \emph{Hurwitz--Lerch type central binomial series} (HLCBS):
    \begin{align}
        \Phi_{HCB}(s,a,z)\coloneqq\sum_{n=0}^{\infty}\frac{(2z)^{2(n+a)}}{\binom{2(n+a)}{n+a}(n+a)^{s}},
    \end{align} 
which extends HCBS by adding one more variable $z$, and gives a representation of it when $s$ is an integer using generalised hypergeometric functions (see \cref{choukika1}).
In \cref{sec:main}, by applying Euler's transformation formula to this hypergeometric series representation, we provide a generalisation of \cref{thm:Lehmer1} (see \cref{choukika2}).
Furthermore, results at negative integer points of the HCBS were obtained by using the same method as in \cref{thm:Lehmer2} (see \cref{zenka}).

\section{Definitions and Preliminaries}\label{sec:Def}

First, we introduce HLCBS and HCBS the Hurwitz type generalizations of the classical CBS, which are the subject of this paper.
    \begin{dfn}[Hurwitz--Lerch type central binomial series (HLCBS)]\label{def:HLCBS}
        For any $|z|<1$, $a\in{\RR\setminus\ZZ_{\leq 0}}$ and $s\in\CC$, we define $\Phi_{HCB}(s,a,z)$ by this series
            \begin{align}
                \Phi_{HCB}(s,a,z)\coloneqq\sum_{n=0}^{\infty}\frac{(2z)^{2(n+a)}}{\binom{2(n+a)}{n+a}(n+a)^s}.
            \end{align} 
        Here, the binomial coefficient is extended to the real arguments $x$ and $y$ as follows:
            \begin{align}
                \binom{x}{y}\coloneqq\frac{\Gamma(x+1)}{\Gamma(y+1)\Gamma(x-y+1)}.
            \end{align}
    \end{dfn}

We note that, for any positive integers $m$, we have 
    \begin{align}
			\Phi_{HCB}\left(s,-m+\frac{1}{2},z\right)
				&=\sum_{n=-m}^{\infty}
					\frac{(2z)^{2\left(n+\frac{1}{2}\right)}}
					{\binom{2(n+\frac{1}{2})}{n+\frac{1}{2}}\left(n+\frac{1}{2}\right)^s}
				=\sum_{n=0}^{\infty}
					\frac{(2z)^{2\left(n+\frac{1}{2}\right)}}
					{\binom{2(n+\frac{1}{2})}{n+\frac{1}{2}}\left(n+\frac{1}{2}\right)^s}\\
				&=\Phi_{HCB}\left(s,\frac{1}{2},z\right)
    \end{align}
because $\binom{2\left(n+1/2\right)}{n+1/2}^{-1}=0$ for negative $n$.
Therefore, $a$ is henceforth assumed to be a real number that is not a half-integer less than or equal to zero.
We are interested in assigning various values to the third variable $z$ in the HLCBS. 
Among them, we will refer to the one substituting $z=1/2$ as \emph{Hurwitz type central binomial series} (HCBS) and denote it as 
    \begin{align}
        \zeta_{HCB}(s,a)\coloneqq\Phi_{HCB}\left(s,a,\frac{1}{2}\right)
            =\sum_{n=0}^{\infty}\frac{1}{\binom{2(n+a)}{n+a}(n+a)^s}.
    \end{align}
It is clear from the definition that HCBS is a generalisation of CBS, since $\zeta_{HCB}(s,1)=\zeta_{CB}(s)$. 
In particular, this paper focuses on the special values of the HLCBS (or HCBS) when the first variable $s$ is an integer.

By definition, the following relation and hypergeometric series representations can be easily obtained by direct calculation. 

    \begin{lem}[Defferential relation]\label{lem:df}
        For any $|z|<1$, $a\in\mathbb{R}\backslash\frac{1}{2}\mathbb{Z}_{\leq 0}$ and $s\in\mathbb{C}$, we have
        \begin{align}
            \frac{1}{2}\theta_z\Phi_{HCB}(s,a,z)=\Phi_{HCB}(s-1,a,z).\label{eq:lem1}
        \end{align}
        Here, $\theta_{z}\coloneqq z\frac{\d}{\d z}$ is the Euler operator.
    \end{lem}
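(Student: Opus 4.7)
The plan is to carry out a direct, term-by-term differentiation of the series defining $\Phi_{HCB}(s,a,z)$ and observe that the Euler operator $\theta_z$ extracts exactly the factor $2(n+a)$ from each summand, which is precisely what is needed to shift $s$ to $s-1$.

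First I would write out the summand $(2z)^{2(n+a)}/\bigl[\binom{2(n+a)}{n+a}(n+a)^s\bigr]$ and isolate the $z$-dependence as $z^{2(n+a)}$ (times a constant $2^{2(n+a)}$). Applying $\theta_z = z\,\d/\d z$ to $z^{2(n+a)}$ yields $2(n+a)\,z^{2(n+a)}$, so term-wise we obtain
\begin{align}
\theta_z\!\left(\frac{(2z)^{2(n+a)}}{\binom{2(n+a)}{n+a}(n+a)^s}\right)
=\frac{2(n+a)\,(2z)^{2(n+a)}}{\binom{2(n+a)}{n+a}(n+a)^s}
=\frac{2\,(2z)^{2(n+a)}}{\binom{2(n+a)}{n+a}(n+a)^{s-1}}.
\end{align}
Summing over $n\ge 0$ and dividing by $2$ gives \cref{eq:lem1} formally.

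To justify swapping $\theta_z$ with the infinite sum, I would note that for fixed $a$ the ratio $\binom{2(n+a)}{n+a}^{-1}$ behaves like $\sqrt{\pi(n+a)}/4^{n+a}$ (up to bounded factors, via Stirling applied to the Gamma-function expression of the binomial), so that each term is $O(n^{1/2-\operatorname{Re}(s)}|z|^{2(n+a)})$. Hence on any compact subset of the disk $|z|<1$ the differentiated series converges absolutely and uniformly, legitimising the term-wise application of $\theta_z$.

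I do not anticipate a genuine obstacle: the only point requiring a little care is the uniform convergence needed to differentiate under the summation, and that is handled by the standard Stirling estimate on the central binomial factor combined with $|z|<1$. Everything else is a one-line computation.
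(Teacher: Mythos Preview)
Your proof is correct and is exactly the ``direct calculation'' the paper alludes to (the paper does not spell out a proof of this lemma beyond that phrase). If anything, you give more detail than the paper by explicitly justifying the term-wise differentiation via the Stirling estimate.
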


    \begin{prop}\label{choukika1}
        For any $|z|<1$, $a\in\mathbb{R}\backslash\frac{1}{2}\mathbb{Z}_{\leq 0}$ and $k\in{\ZZ_{> 0}}$, we have
            \begin{align}
                \Phi_{HCB}(k,a,z)&=\frac{4^a}{\binom{2a}{a}a^k}z^{2a}
                    \HGF{k+1}
                        {k}
                        {1,\,a,\,\ldots,\,a}
                        {a+\frac{1}{2},\,a+1,\,\ldots,\,a+1}
                        {z^2},\label{eq:prop1-1}\\
                \Phi_{HCB}(1-k,a,z)&=\frac{4^a}{\binom{2a}{a}a^{1-k}}z^{2a}
                    \HGF{k+1}
                        {k}
                        {1,\,a+1,\,\ldots,\,a+1}
                        {a+\frac{1}{2},\,a,\,\ldots,\,a}
                        {z^2}.\label{eq:prop1-2}
            \end{align}
        Here, the generalized hypergeometric series is defined by
        \begin{align}
            \HGF{p+1}
                {p}
                {\alpha_{0},\,\alpha_{1},\,\ldots,\,\alpha_{p}}
                {\beta_{1},\,\ldots,\,\beta_{p}}
                {z}
            \coloneqq\sum_{n=0}^{\infty}
                \frac{(\alpha_{0})_{n}(\alpha_{1})_{n}\cdots(\alpha_{p})_{n}}  {(\beta_{1})_{n}\cdots(\beta_{p})_{n}(1)_{n}}z^{n}, 
        \end{align}
        for $\alpha_{0}$, $\alpha_{1}$, $\ldots$, $\alpha_{p}\in{\CC}$, 
        $\beta_{1}$, $\ldots$, $\beta_{p}\in{\CC\setminus\ZZ_{\leq 0}}$, 
        and shifted factorial $(\alpha)_{n}$ is defined as 
        \begin{align}
            (\alpha)_n:=\frac{\Gamma(\alpha+n)}{\Gamma(\alpha)}=\left\{
            \begin{array}{ll}
                \displaystyle\prod_{l=0}^{n-1}(\alpha+l) & n\neq 0,\\
                \\
                1 & n=0,
            \end{array}\right.
        \end{align}
        for any $\alpha\in{\CC}$ and $n\in{\ZZ_{\ge 0}}$.
    \end{prop}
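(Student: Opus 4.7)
The plan is a direct calculation. First I would pull the $n=0$ contribution out of the defining sum, writing
\begin{align*}
\Phi_{HCB}(s,a,z) = \frac{(2z)^{2a}}{\binom{2a}{a}\,a^{s}}\sum_{n=0}^{\infty}\frac{\binom{2a}{a}}{\binom{2(n+a)}{n+a}}\left(\frac{a}{n+a}\right)^{s}(2z)^{2n},
\end{align*}
and observe that $(2z)^{2a} = 4^{a}z^{2a}$, so the overall prefactor already matches $\frac{4^{a}z^{2a}}{\binom{2a}{a}a^{s}}$ appearing in both \cref{eq:prop1-1} and \cref{eq:prop1-2}. It then remains to identify the series part with the claimed $_{k+1}F_{k}$.

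The main step is to rewrite the binomial ratio using Pochhammer symbols. Passing through the Gamma function gives
\begin{align*}
\frac{\binom{2a}{a}}{\binom{2(n+a)}{n+a}} = \frac{((a+1)_{n})^{2}}{(2a+1)_{2n}},
\end{align*}
and the Legendre duplication identity $(2a+1)_{2n} = 4^{n}(a+\tfrac{1}{2})_{n}(a+1)_{n}$ collapses this to $\frac{(a+1)_{n}}{4^{n}(a+\tfrac{1}{2})_{n}}$. Together with $(2z)^{2n} = 4^{n}z^{2n}$ the factors of $4^{n}$ cancel, leaving a summand of the form $\frac{(a+1)_{n}}{(a+\tfrac{1}{2})_{n}}\left(\frac{a}{n+a}\right)^{s}z^{2n}$.

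To handle the remaining power factor I would use the elementary identity $\frac{(a)_{n}}{(a+1)_{n}} = \frac{a}{a+n}$. For $s=k\in\ZZ_{>0}$, raising to the $k$-th power produces $\frac{((a)_{n})^{k}}{((a+1)_{n})^{k}}$, so the summand rewrites as
\begin{align*}
\frac{((a)_{n})^{k}}{(a+\tfrac{1}{2})_{n}\,((a+1)_{n})^{k-1}}\,z^{2n} = \frac{(1)_{n}\,((a)_{n})^{k}}{(a+\tfrac{1}{2})_{n}\,((a+1)_{n})^{k-1}\,(1)_{n}}\,z^{2n},
\end{align*}
which is exactly the $n$-th term of the $_{k+1}F_{k}$ on the right of \cref{eq:prop1-1}. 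The case $s = 1-k$ is entirely symmetric: the inverse ratio $((a+1)_{n}/(a)_{n})^{k-1}$ appears instead, yielding $\frac{((a+1)_{n})^{k}}{(a+\tfrac{1}{2})_{n}\,((a)_{n})^{k-1}}z^{2n}$ and matching \cref{eq:prop1-2}.

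No serious obstacle is expected: the proof is essentially bookkeeping once the duplication identity is applied, with care required only to track the $4^{n}$ cancellation and to insert the vacuous $(1)_{n}/(1)_{n}$ that brings the sum into the canonical $_{k+1}F_{k}$ form.
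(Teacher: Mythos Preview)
Your proposal is correct and matches the paper's approach: the paper states only that the representations ``can be easily obtained by direct calculation'' and gives no further details, and your computation via the duplication identity $(2a+1)_{2n}=4^{n}(a+\tfrac12)_{n}(a+1)_{n}$ together with $(a)_{n}/(a+1)_{n}=a/(a+n)$ is exactly the direct calculation intended.
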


Through the differential relation \cref{eq:lem1}, the following relationship between \cref{eq:prop1-1,eq:prop1-2} holds
    \begin{align}
        \begin{array}{c}
            \vdots\\
            \downarrow\\
            \Phi_{HCB}(2,a,z)=\dfrac{4^a}{\binom{2a}{a}a^{2}}z^{2a}
                \HGF{3}
                    {2}
                    {1,\,a,\,a}
                    {a+\frac{1}{2},\,a+1}
                    {z^2}\\
            \downarrow\\
            \Phi_{HCB}(1,a,z)=\dfrac{4^a}{\binom{2a}{a}a}z^{2a}
                \HGF{2}
                    {1}
                    {1,\,a}
                    {a+\frac{1}{2}}
                    {z^2}\\
            \downarrow\\
              \Phi_{HCB}(0,a,z)=\dfrac{4^a}{\binom{2a}{a}}z^{2a}
                \HGF{2}
                    {1}
                    {1,\,a+1}
                    {a+\frac{1}{2}}
                    {z^2}\\
            \downarrow\\
              \Phi_{HCB}(-1,a,z)=\dfrac{4^{a}a}{\binom{2a}{a}}z^{2a}
                \HGF{3}
                    {2}
                    {1,\,a+1,\,a+1}
                    {a+\frac{1}{2},\,a}
                    {z^2}\\
            \downarrow\\
            \vdots\\
        \end{array}
    \end{align}
where $\downarrow$ means that $\frac{1}{2}\theta_{z}$ is acted on each side. 
We find it amusing that both $\Phi_{HCB}(k,a,z)$ and $\Phi_{HCB}(1-k,a,z)$ have the same $_{k+1}F_{k}$ and the symmetry $a\longleftrightarrow a+1$ in the parameters in $_{k+1}F_{k}$ except for common $1$ and $a+1/2$.

In the next section, we give an alternative hypergeometric expression of $\Phi_{HCB}(1,a,z)$, which allows us to generalise Lehmer's \cref{thm:Lehmer2}. We observe that from \cref{lem:df} the case of $s=1$ $(\Phi_{HCB}(1,a,z))$ is more or less essential.

\section{Special values of HLCBS and HCBS}\label{sec:main}
We start with the following theorem.
    \begin{thm}\label{choukika2}
        For any $|z|<1$ and $a\in\mathbb{R}\backslash\frac{1}{2}\mathbb{Z}_{\leq 0}$, we have
            \begin{align}\label{eq:thm1}
                \Phi_{HCB}(1,a,z)
                =\frac{4^a}{\binom{2a}{a}a}\times\frac{z^{2a}}{\sqrt{1-z^2}}
                    \HGF{2}
                        {1}
                        {\frac{1}{2}, a-\frac{1}{2}}
                        {a+\frac{1}{2}}
                        {z^2}.
            \end{align}
    \end{thm}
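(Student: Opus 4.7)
My plan is to deduce \cref{choukika2} from the $k=1$ case of \cref{choukika1} by a single application of Euler's transformation formula for the Gauss hypergeometric series. Specifically, setting $k=1$ in \cref{eq:prop1-1} gives
\begin{align}
\Phi_{HCB}(1,a,z) = \frac{4^a}{\binom{2a}{a}a}\, z^{2a}\,\HGF{2}{1}{1,\,a}{a+\frac{1}{2}}{z^2},
\end{align}
so the theorem reduces to the single identity
\begin{align}
\HGF{2}{1}{1,\,a}{a+\frac{1}{2}}{z^2} = \frac{1}{\sqrt{1-z^2}}\,\HGF{2}{1}{\frac{1}{2},\,a-\frac{1}{2}}{a+\frac{1}{2}}{z^2}.
\end{align}

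To establish this, I would invoke Euler's transformation
\begin{align}
\HGF{2}{1}{\alpha,\,\beta}{\gamma}{w} = (1-w)^{\gamma-\alpha-\beta}\,\HGF{2}{1}{\gamma-\alpha,\,\gamma-\beta}{\gamma}{w},
\end{align}
with the choice $\alpha = 1$, $\beta = a$, $\gamma = a+\tfrac12$, and $w=z^2$. A direct check of the parameters gives $\gamma-\alpha-\beta = -\tfrac12$, $\gamma-\alpha = a-\tfrac12$, and $\gamma-\beta = \tfrac12$, which match the right-hand side of the target identity exactly. Substituting back into the expression for $\Phi_{HCB}(1,a,z)$ produces the claimed formula.

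There is essentially no genuine obstacle here; the argument is a one-line reduction once the preceding proposition is in hand. The only minor care required is verifying the validity of the transformation at the chosen parameters: the denominator parameter $a+\tfrac12$ must avoid non-positive integers, which is guaranteed by the standing assumption $a\in\RR\setminus\tfrac12\ZZ_{\le 0}$, and the condition $|z^2|<1$ needed for convergence coincides with the assumption $|z|<1$ already in force. Since the standard proof of Euler's transformation (via the Pfaff transformation applied twice) is well known, I would simply cite it rather than reproduce the derivation.
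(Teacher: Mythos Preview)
Your argument is correct and is exactly the approach the paper itself identifies: the authors explicitly note that the claim follows immediately from \cref{eq:prop1-1} with $k=1$ by Euler's transformation, which is precisely what you do. The paper then supplements this with an alternative elementary proof (a direct induction on the power-series coefficients), but that is offered as an addition rather than a replacement, so your proposal matches the paper's primary argument.
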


    \begin{proof}
        The claim of \cref{choukika2} is obtained immediately by applying Euler's transformation formula to the Gaussian hypergeometric function on the right-hand side of \cref{eq:prop1-1} for $k=1$: 
            \begin{align}\label{eq:pfthm1-1}
                \Phi_{HCB}(1,a,z)
                =\frac{4^a}{\binom{2a}{a}a}z^{2a}
                    \HGF{2}
                        {1}
                        {1,\,a}
                        {a+\frac{1}{2}}
                        {z^2}.
            \end{align}
        However, a more elementary proof is given here. 
        If $a=1/2$, the claim follows from equation \cref{eq:pfthm1-1} because
            \begin{align}
                \HGF{2}{1}{\frac{1}{2}, 0}{1}{z^2}=1\quad\text{and}\quad\HGF{2}{1}{1, \frac{1}{2}}{1}{z^2}=\frac{1}{\sqrt{1-z^2}}.
            \end{align}
        So in the following we assume $a\neq 1/2$.
        The difference of the right-hand sides of \cref{eq:thm1,eq:pfthm1-1} is 
            \begin{align}
            &\frac{4^a}{\binom{2a}{a}a}\frac{z^{2a}}{\sqrt{1-z^2}}\left(\sqrt{1-z^2}\HGF{2}{1}{1, a}{a+\frac{1}{2}}{z^2}-\HGF{2}{1}{\frac{1}{2}, a-\frac{1}{2}}{a+\frac{1}{2}}{z^2}\right)\\
            &=\frac{4^a}{\binom{2a}{a}a}\frac{z^{2a}}{\sqrt{1-z^2}}\sum_{n=0}^{\infty}\left(\sum_{m=0}^{n}\frac{\left(-\frac{1}{2}\right)_{m}(a)_{n-m}}{\left(a+\frac{1}{2}\right)_{n-m}m!}-\frac{\left(\frac{1}{2}\right)_n\left(a-\frac{1}{2}\right)_n}{\left(a+\frac{1}{2}\right)_n n!}\right)z^{2n}\\
                &=\frac{4^a}{\binom{2a}{a}a}
                \frac{z^{2a}}{\sqrt{1-z^2}}
                \sum_{n=0}^{\infty}
                    \frac{(a)_n}{\left(a+\frac{1}{2}\right)_n}
                    \left(
                        \sum_{m=0}^{n}
                            \frac{\left(-\frac{1}{2}\right)_m\left(\frac{1}{2}-a-n\right)_m}{(1-a-n)_m m!}
                        -\frac{\left(\frac{1}{2}\right)_n\left(a-\frac{1}{2}\right)_n}{\left(a+\frac{1}{2}\right)_{n} n!}
                    \right)z^{2n}.
            \end{align}
        Here, it can be seen that the coefficient 
            \begin{align}\label{kinou}
                \sum_{m=0}^{n}
                    \frac{\left(-\frac{1}{2}\right)_m\left(\frac{1}{2}-a-n\right)_m}{(1-a-n)_m m!}
                -\frac{\left(\frac{1}{2}\right)_n\left(a-\frac{1}{2}\right)_n}{\left(a+\frac{1}{2}\right)_n n!}
            \end{align}
        of the series vanishes for any integers $n\ge 0$. In fact, it follows immediately when $n=0$ and $1$. If \cref{kinou} vanishes up to $k=n-1$, then when $k=n$ we have
            \begin{align}
                \sum_{m=0}^{n}
                    \frac{\left(-\frac{1}{2}\right)_m\left(\frac{1}{2}-a-n\right)_{m}}{\left(1-a-n\right)_{m}m!}
                &=\frac{\left(-\frac{1}{2}\right)_n\left(\frac{1}{2}-a-n\right)_{n}}{\left(1-a-n\right)_{n}n!}
                +\sum_{m=0}^{n-1}
                    \frac{\left(-\frac{1}{2}\right)_m\left(\frac{1}{2}-(a+1)-(n-1)\right)_{m}}{\left(1-(a+1)-(n-1)\right)_{m}m!}\\
                &=\frac{\left(\frac{1}{2}\right)_n\left(a-\frac{1}{2}\right)_{n}}{\left(a\right)_{n}n!}
                \left(
                    \frac{2a+2n-1}{(1-2n)(2a-1)}+\frac{4na}{(2n-1)(2a-1)}
                \right)\\
                &=\frac{\left(\frac{1}{2}\right)_n\left(a-\frac{1}{2}\right)_{n}}{\left(a\right)_{n}n!}.
            \end{align}
        By induction, \cref{kinou} vanishes for any integers $n\ge 0$.
    \end{proof}

    \begin{rmk}
         We note that, from \cref{eq:pfthm1-1}, $\Phi_{HCB}(1,a,z)$ satisfies the following {\em first-order} differential equation:
            \begin{align}
                \left[(1-z^{2})z\frac{\d}{\d z}-1\right]y=(2a-1)z^{2a},
            \end{align}
        which is a Fuchsian type with regular singularities at $z=0$, $1$, $-1$ and $\infty$ in $\PP^{1}(\CC)\coloneqq\CC\cup\{\infty\}$. In particular, when $a\neq1/2$, the right-hand side of \cref{eq:thm1} is obtained as a general solution of this equation.
    \end{rmk}

We note that \cref{choukika2} is a generalisation of \cref{thm:Lehmer1}. In fact,
    \begin{align}
        \Phi_{HCB}(1,1,z)=\frac{2z^2}{\sqrt{1-z^2}}\HGF{2}{1}{\frac{1}{2}, \frac{1}{2}}{\frac{3}{2}}{z^2}=\frac{2z\,\arcsin(z)}{\sqrt{1-z^{2}}}.
    \end{align}
Furthermore, as will be discussed later, it can be seen that the derivative of the right-hand side is an analogue of the $\arcsin(z)$ derivative when considered (see \cref{eq:dHGF}).
Consider the Euler operator acting on \cref{eq:thm1} $k$-times.
Then, from \cref{lem:df}, we have the following theorem:
    \begin{thm}\label{zenka}
        For any $a\in\mathbb{R}$, we define the sequence of polynomials $\bigl(p_k(a, x)\bigr)_{k\geq -1}$ by the recursion
            \begin{align}
                p_{k+1}(a, x)&=2((k+1-a)x+a)p_{k}(a, x)+2x(1-x)p_{k}'(a, x)+q_{k}(x)   
            \end{align}
        with initial value $p_{-1}(a,x)=0$ ($q_k(x)$ is the previously defined polynomial).
        Then, for any $k\in{\ZZ_{\ge 0}}$ and $a\in{\RR\setminus\frac{1}{2}\ZZ_{\leq 0}}$, we have
            \begin{align}
                &2^{k-1}\Phi_{HCB}(1-k,a,z)\\
                &\quad=\frac{4^a z^{2a}}{2a\binom{2a}{a}(1-z^2)^{k+\frac{1}{2}}}
                    \left(
                        (2a-1)\sqrt{1-z^2}p_{k-1}(a,z^2)+
                        \HGF{2}
                            {1}
                            {\frac{1}{2}, a-\frac{1}{2}}
                            {a+\frac{1}{2}}
                            {z^2}
                            q_{k-1}(z^2)
                    \right).\label{eq:zenka1}
            \end{align}
    \end{thm}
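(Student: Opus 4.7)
I plan to prove \cref{zenka} by induction on $k\ge 0$, using the differential relation $\frac{1}{2}\theta_z\Phi_{HCB}(s,a,z)=\Phi_{HCB}(s-1,a,z)$ from \cref{lem:df}. The base case $k=0$ is precisely \cref{choukika2} since $p_{-1}(a,x)=0$ and $q_{-1}(x)=1$. For the inductive step, I will apply $\theta_z$ (which doubles the prefactor $2^{k-1}$, producing $2^{k}\Phi_{HCB}(-k,a,z)=2^{k}\Phi_{HCB}(1-(k+1),a,z)$) to the right-hand side of the formula at level $k$, and verify that the result matches the formula at level $k+1$ using the recursions defining $p_k(a,x)$ and $q_k(x)$.

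The central analytic ingredient is the derivative identity $\theta_{z}F=(2a-1)\!\left[(1-z^{2})^{-1/2}-F\right]$, where $F\coloneqq{}_2F_1(\tfrac12,a-\tfrac12;a+\tfrac12;z^{2})$ (this should be the formula labelled \cref{eq:dHGF} in the paper, proved either by direct coefficient comparison with the expansion of $(1-z^{2})^{-1/2}-F$, or via a standard contiguous relation). With the shorthand $P_j\coloneqq p_j(a,z^{2})$, $Q_j\coloneqq q_j(z^{2})$, and $A\coloneqq 4^{a}/(2a\binom{2a}{a})$, applying $\theta_z$ to $Az^{2a}(1-z^{2})^{-k-1/2}\bigl[(2a-1)\sqrt{1-z^{2}}\,P_{k-1}+FQ_{k-1}\bigr]$ using $\theta_z z^{2a}=2az^{2a}$, $\theta_z(1-z^{2})^{-\alpha}=2\alpha z^{2}(1-z^{2})^{-\alpha-1}$, $\theta_z P_{k-1}=2z^{2}\partial_x p_{k-1}(a,z^{2})$, and similarly for $Q_{k-1}$ and $F$, produces, after factoring out $Az^{2a}(1-z^{2})^{-k-3/2}$, two blocks:
\begin{align}
&(2a-1)\sqrt{1-z^{2}}\cdot 2\bigl[(a+(k-a)z^{2})P_{k-1}+z^{2}(1-z^{2})\partial_x p_{k-1}(a,z^{2})\bigr],\\
&2\bigl[(a+(k+\tfrac12-a)z^{2})FQ_{k-1}+z^{2}(1-z^{2})F'Q_{k-1}+z^{2}(1-z^{2})FQ_{k-1}'\bigr].
\end{align}
The first block is $(2a-1)\sqrt{1-z^{2}}(P_k-Q_{k-1})$ by the $p$-recursion in the statement of \cref{zenka}.

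Inside the second block, the derivative identity rewrites $2z^{2}(1-z^{2})F'Q_{k-1}$ as $(2a-1)\sqrt{1-z^{2}}\,Q_{k-1}-(2a-1)(1-z^{2})FQ_{k-1}$. The stray $(2a-1)\sqrt{1-z^{2}}\,Q_{k-1}$ cancels the $-(2a-1)\sqrt{1-z^{2}}\,Q_{k-1}$ coming from the first block, leaving $(2a-1)\sqrt{1-z^{2}}\,P_k$ together with $F$ multiplied by $(2a+(2k+1-2a)z^{2}-(2a-1)(1-z^{2}))Q_{k-1}+2z^{2}(1-z^{2})Q_{k-1}'=(1+2kz^{2})Q_{k-1}+2z^{2}(1-z^{2})Q_{k-1}'=Q_k$ by Lehmer's $q$-recursion. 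This is exactly the claim at level $k+1$, completing the induction. The main obstacle is the derivative identity \cref{eq:dHGF}; once it is in hand, the rest is a clean, if slightly fiddly, verification that the $\sqrt{1-z^{2}}$ terms cancel in exactly the right way for the $p$- and $q$-recursions to close up.
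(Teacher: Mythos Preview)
Your proposal is correct and follows essentially the same inductive strategy as the paper: base case $k=0$ from \cref{choukika2}, inductive step via $\theta_z$ and \cref{lem:df}, with the key analytic input being the derivative identity for $F={}_2F_1(\tfrac12,a-\tfrac12;a+\tfrac12;z^2)$. Your identity $\theta_z F=(2a-1)\bigl[(1-z^2)^{-1/2}-F\bigr]$ is exactly equivalent to the paper's \cref{eq:dHGF} (the paper packages the $z^{2a-1}$ factor into $u_1(z)=z^{2a-1}F$ and states it as $\tfrac{d}{dz}u_1=(2a-1)z^{2a-2}(1-z^2)^{-1/2}$), and the ensuing bookkeeping that produces the $p$- and $q$-recursions is the same in both arguments, just organised slightly differently.
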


    \begin{proof}
        Denote 
            \begin{align}
                u_1(z)\coloneqq z^{2a-1}\HGF{2}{1}{\frac{1}{2}, a-\frac{1}{2}}{a+\frac{1}{2}}{z^2},
                \qquad u_2(z)\coloneqq z^{2a-1}\sqrt{1-z^2}.
            \end{align}
        We show the claim by induction on $k$, using \cref{lem:df}. The case of $k=0$ is trivially holds. Assume the claim is true for some arbitrary $k=n\ge 0$. 
        We note that, for any $a\in{\RR\setminus\frac{1}{2}\ZZ_{\le 0}}$, we obtain
            \begin{align}
                \frac{\d}{\d z}\left(z^{2a-1}\HGF{2}{1}{\frac{1}{2},a-\frac{1}{2}}{a+\frac{1}{2}}{z^2}\right)=(2a-1)\frac{z^{2a-2}}{\sqrt{1-z^2}}.\label{eq:dHGF}
            \end{align}
        Therefore by \cref{lem:df,eq:dHGF}, we have        
        \begin{align}
            &2^{n}\Phi_{HCB}(1-(n+1),a,z)=\theta_{z}(2^{n-1}\Phi_{HCB}(1-n,a,z))\\
            &\quad=\frac{4^{a}z}{2a\binom{2a}{a}}\frac{\d}{\d z}
                \left(
                    \frac{z}{(1-z^2)^{n+\frac{1}{2}}}u_{1}(z)q_{n-1}(z^2)
                    +(2a-1)\frac{z}{(1-z^2)^{n+\frac{1}{2}}}u_{2}(z)p_{n-1}(a,z^2)
                \right)\\
            &\quad=\frac{4^{a}z}{2a\binom{2a}{a}(1-z^2)^{n+1+\frac{1}{2}}}
                \Bigg(
                    \left(
                        (2nz^2+1)q_{n-1}(z^2)+2z^2(1-z^2)q_{n-1}'(z^2)
                    \right)u_1(z)\\
                    &\qquad+(2a-1)
                        \Bigl(
                            2\bigl((n-a)z^{2}+a\bigr)p_{n-1}(a,z^{2})
                            +2z^2(1-z^2)p_{n-1}'(a,z^2)
                            +q_{n-1}(z^2)
                        \Bigr)u_2(z)
                \Bigg)\\
            &\quad=\frac{4^az}{2a\binom{2a}{a}(1-z^2)^{n+1+\frac{1}{2}}}
                \left(q_{n}(z^2)u_1(z)+(2a-1)p_{n}(a,z^2)u_2(z)\right).
        \end{align}
        Thus, the claim holds for any integers $k\ge 0$, and the proof is complete.
    \end{proof}

This theorem is a generalisation involving \cref{thm:Lehmer2}. 
    Furthermore, the polynomial $q_k(x)$ is the same as that appearing in \cref{thm:Lehmer2}, so it is related to the bivariable Eulerian polynomial as in \cref{th:BM}. 
On the other hand, the polynomial $p_{k}(a,x)$ interpolates $p_{k}(x)$ and $q_{k}(x)$, that is $p_{k}(0,x)=q_{k}(x)$ (for $k>-1$), and $p_{k}(1,x)=p_{k}(x)$ (for $k\ge -1$). 
    \begin{table}[H]
        \begin{center}
            \begin{tabular}{|c|c|c|c|} \hline
               $ n$ &    $p_{n}(x)$    &                 $p_{n}(a,x)$                  & $q_{n}(x)$ \\ \hline
               $-1$ &        $0$       &                     $0$                       &    $1$     \\ \hline
               $0$  &        $1$       &                     $1$                       &    $1$     \\ \hline
               $1$  &        $3$       &               $2(1-a)x+2a+1$                  &    $2x+1$     \\ \hline
               $2$  &      $8x+7$      & \begin{tabular}{c}
																					$4(1-a)^{2}x^{2}-2(4a^{2}-3a-5)x$\\
																					$+4a^{2}+2a+1$
																				\end{tabular} &    $4x^{2}+10x+1$     \\ \hline
               $3$  & $20x^{2}+70x+15$ & \begin{tabular}{c}
                                            $8(1-a)^{3}x^{3}$\\
																						$+4(6a^{3}-11a^{2}-5a+15)x^{2}$\\
                                            $-2(12a^{3}-8a^{2}-21a-18)x$\\
																						$+(2a+1)(4a^{2}+1)$
                                         \end{tabular} &    $8x^{3}+60x^{2}+36x+1$     \\ \hline
            \end{tabular}
            \caption{First small examples of $p_{n}(x)$, $p_{n}(a,x)$, and $q_{n}(x)$.}
            \label{table:ppaq}
        \end{center}
    \end{table}
    
Besides, an Eulerian polynomial representation can also be given for $p_{n}(a,x)$, as follows.
    \begin{thm}\label{thm:ptoE}
        For any $n\in{\ZZ_{\ge 0}}$, $a\in{\RR\setminus\frac{1}{2}\ZZ_{\le 1}}$ and $|z|<1$, we have 
            \begin{align}
                p_{n}(a,z)=2^{n}\sum_{j=0}^{n}\sum_{l=0}^{j}\binom{n+1}{j+1}\binom{j}{l}
                    (a-1)^{j-l}(1-z)^{j-l}E_{n-j}\left(z,\frac{1}{2}\right)
                    E_{l}\left(z,\frac{1}{2}\right).
            \end{align} 
    \end{thm}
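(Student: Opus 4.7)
The plan is to show the identity by matching exponential generating functions. Let $g(s) := \mathscr{E}(z, 1/2; s) = \sum_{m \ge 0} E_m(z, 1/2)\, s^m/m!$ and define
$$\phi(u) := \sum_{n \ge 0} p_n(a, z)\, \frac{u^n}{n!}.$$
Using the recursion for $p_n(a,z)$ (writing $(n+1-a)z + a = (n+1)z + a(1-z)$), together with the identity $q_n(z) = 2^{n+1} E_{n+1}(z, 1/2)$ from \cref{eq:qE}, a direct EGF computation yields the first-order linear PDE
$$(1 - 2zu)\,\phi_u = 2[z + a(1-z)]\,\phi + 2z(1-z)\,\phi_z + 2\, g'(2u), \qquad \phi(0, z) = 1.$$
The $u\phi_u$ piece arises from $\sum_n (n+1) p_n u^n/n! = \phi + u\phi_u$, and the inhomogeneity from $\sum_n q_n u^n/n! = 2\, g'(2u)$.

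Next, denote the right-hand side of \cref{thm:ptoE} by $P_n(a, z)$ and set $\Phi(u) := \sum_n P_n(a, z)\, u^n/n!$. Recognising that $\sum_{l=0}^{j} \binom{j}{l}(a-1)^{j-l}(1-z)^{j-l} E_l(z, 1/2)$ is $j!$ times the $t^j$-coefficient of $g(t) e^{(a-1)(1-z) t}$, and invoking the generating function identity $\sum_{m,j}(m+j+1)\alpha_j \beta_m t^{m+j} = \tfrac{d}{dt}[t A(t) B(t)]$ with $A(t) = \sum_j h_j t^j/(j+1)!$ and $B(t) = g(t)$, the triple sum collapses to the integral form
$$\Phi(u) = g(2u)^2\, e^{2u(a-1)(1-z)} + g'(2u) \int_0^{2u} g(s)\, e^{s(a-1)(1-z)}\, ds.$$

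The heart of the proof is to verify that $\Phi$ satisfies the PDE and initial condition above. The initial condition $\Phi(0) = 1$ is immediate. For the PDE, the decisive inputs are two identities for $g$, obtained by direct differentiation of the explicit form $g(s) = [(1 - z)/(e^{s(z-1)} - z)]^{1/2}$:
$$2\, g'(s) = z\, g(s)^3 + (1 - z)\, g(s), \qquad 2(1 - sz)\, g'(s) - 2z(1 - z)\, g_z(s) = g(s),$$
the second of which gives (by differentiating in $s$) the auxiliary identity $2(1 - sz)\, g''(s) = (2z + 1)\, g'(s) + 2z(1 - z)\, g_{sz}(s)$. Differentiating $\Phi$ in $u$ and $z$ by the product rule and the fundamental theorem of calculus, the $z$-derivative produces auxiliary integrals $\int g_z\, e^{\cdots} ds$ and $\int s g\, e^{\cdots} ds$; substituting the auxiliary PDE into the former and integrating by parts, the coefficients of the integral $I := \int_0^{2u} g(s) e^{s(a-1)(1-z)}\, ds$, of the exponential $e^{2u(a-1)(1-z)}$, and of the constant term $g'(2u)$ all match on the two sides of the PDE. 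Since a first-order linear PDE with prescribed initial data has a unique solution, $\phi \equiv \Phi$, and extracting the $u^n/n!$ coefficient proves the claim.

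The main obstacle is this verification: it is precisely the auxiliary PDE $2(1 - sz) g' - 2z(1 - z) g_z = g$, which intertwines the $s$- and $z$-derivatives of $g$ through the factor $(1 - sz)$, that makes the extraneous integrals cancel. A more pedestrian alternative is induction on $n$ directly from the recursion, using the identity $\partial_z E_m(z, 1/2) = [E_m(z, 3/2) - (2m + 1) E_m(z, 1/2)] / [2(1 - z)]$ (also derivable from $g$); this route is elementary but requires heavier bookkeeping of the double sum.
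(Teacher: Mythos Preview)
Your proposal is correct but takes a genuinely different route from the paper. The paper's proof runs through \cref{choukika2} and \cref{zenka}: summing \cref{eq:zenka1} against $t^{k}/k!$ and inserting the hypergeometric expression \cref{eq:thm1} for $\Phi_{HCB}(1,a,ze^{t})$ produces a closed form for $P(a,z;t)$ in terms of ${}_2F_1$; the derivative identity \cref{eq:dHGF} then collapses this to
\[
P(a,z;t)=\mathscr{E}\bigl(z,\tfrac12;2t\bigr)\int_{0}^{t} e^{2(a-1)(1-z)\tau}\,\mathscr{E}\bigl(z,\tfrac12;2\tau\bigr)\,d\tau,
\]
and expanding the product gives the double sum. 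You instead bypass the hypergeometric machinery entirely: from the recursion alone you derive the first-order PDE $(1-2zu)\phi_{u}=2[z+a(1-z)]\phi+2z(1-z)\phi_{z}+2g'(2u)$ for the EGF, compute the EGF $\Phi$ of the claimed right side, and check that $\Phi$ satisfies the same PDE and initial value. Since both $\phi$ and $\Phi$ are power series in $u$ and the PDE determines the $u^{n}$-coefficient from the $u^{n-1}$-coefficient, uniqueness is automatic and the argument closes. Your approach buys self-containment: the theorem becomes a statement purely about the recursion and the explicit form of $\mathscr{E}(z,1/2;s)$, with no appeal to \cref{choukika2} or \cref{zenka}. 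The paper's approach buys the hypergeometric closed form for $P(a,z;t)$ as a by-product, which is reused downstream (e.g.\ in the recursion for $\alpha_{n}(a)$). It is worth noting that your $\Phi(u)$ is exactly $\partial_{t}P(a,z;t)\big|_{t=u}$ after the substitution $s=2\tau$ in the integral above, so the two arguments converge on the same integral representation.

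One caveat: your PDE verification is only sketched. You correctly isolate the mechanism---the identity $2(1-sz)g_{s}-2z(1-z)g_{z}=g$ is precisely what eliminates the extraneous integral $\int_{0}^{2u} s\,g(s)e^{\alpha s}\,ds$, and the boundary term from integrating $\int g_{s}e^{\alpha s}\,ds$ by parts supplies the inhomogeneity $2g'(2u)$---but the matching of the remaining $I$- and $E$-coefficients is several lines of algebra that a reader cannot reconstruct from your paragraph alone; in a final write-up this should be spelled out.
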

    \begin{proof}
	From \cref{eq:zenka1}, we have
		\begin{align}
			&\sum_{k=0}^{\infty}\sum_{n=1}^{\infty}\frac{(2(n+a))^{k-1}(2z)^{2(n+a)}}{\binom{2(n+a)}{n+a}}
				\frac{t^{k}}{k!}\\
			&=\frac{4^a z^{2a}}{2a\binom{2a}{a}\sqrt{1-z^2}}
				\Biggl(
					(2a-1)P\left(a,z^{2};\frac{t}{1-z^{2}}\right)\sqrt{1-z^2}\\
			&\quad\quad+\HGF{2}
				{1}
				{\frac{1}{2}, a-\frac{1}{2}}
				{a+\frac{1}{2}}
				{z^2}
				Q\left(z^2;\frac{t}{1-z^{2}}\right)
				\Biggr),
		\end{align}
        where the generating functions
            \begin{align}
                P(a,x;t)&\coloneqq\sum_{n=0}^{\infty}p_{n-1}(a,x)\frac{t^{n}}{n!},\ \ 
                Q(x;t)\coloneqq\sum_{n=0}^{\infty}q_{n-1}(x)\frac{t^{n}}{n!}.
            \end{align}
        By applying \cref{eq:thm1}, we have
						\begin{align}
							\sum_{k=0}^{\infty}\sum_{n=1}^{\infty}
								\frac{(2(n+a))^{k-1}(2z)^{2(n+a)}}{\binom{2(n+a)}{n+a}}
								\frac{t^{k}}{k!}
							&=\sum_{n=1}^{\infty}\frac{(2ze^{t})^{2(n+a)}}{\binom{2(n+a)}{n+a}(2(n+a))}\\
							&=\frac{4^{a}}{2a\binom{2a}{a}}\frac{e^{2at}z^{2a}}{\sqrt{1-e^{2t}z^{2}}}
								\HGF{2}
										{1}
										{\frac{1}{2}, a-\frac{1}{2}}
										{a+\frac{1}{2}}
										{e^{2t}z^2}.
						\end{align}
				Since \cref{eq:qE} holds that $Q(x;t)=\mathscr{E}\left(x,1/2;2t\right)$, we have
            \begin{align}
                &P\left(a,z^{2};\frac{t}{1-z^{2}}\right)\\
                &=\frac{e^{2at}}{(2a-1)\sqrt{1-e^{2t}z^{2}}}
                    \HGF{2}
                        {1}
                        {\frac{1}{2}, a-\frac{1}{2}}
                        {a+\frac{1}{2}}
                        {e^{2t}z^2}\\
                &\ \ \ \ \ -\mathscr{E}\left(z^2,\frac{1}{2};\frac{2t}{1-z^2}\right)\frac{1}{(2a-1)\sqrt{1-z^{2}}}
                    \HGF{2}
                        {1}
                        {\frac{1}{2}, a-\frac{1}{2}}
                        {a+\frac{1}{2}}
                        {z^2}\\
                &=\frac{e^{t}}{(2a-1)\sqrt{1-e^{2t}z^{2}}}\left(e^{(2a-1)t}
                    \HGF{2}
                        {1}
                        {\frac{1}{2}, a-\frac{1}{2}}
                        {a+\frac{1}{2}}
                        {e^{2t}z^2}-
                    \HGF{2}
                        {1}
                        {\frac{1}{2}, a-\frac{1}{2}}
                        {a+\frac{1}{2}}
                        {z^2}
                    \right).
            \end{align}
	From this, we obtain the following:
		\begin{align}
			P(a,z;t)&=\frac{e^{(1-z)t}}{(2a-1)\sqrt{1-e^{2(1-z)t}z}}\\
				&\quad\quad\times\left(e^{(2a-1)(1-z)t}
					\HGF{2}
							{1}
							{\frac{1}{2}, a-\frac{1}{2}}
							{a+\frac{1}{2}}
							{e^{2(1-z)t}z}
				 -\HGF{2}
							{1}
							{\frac{1}{2}, a-\frac{1}{2}}
							{a+\frac{1}{2}}
							{z}
					\right)\\
				&=\mathscr{E}\left(z,\frac{1}{2};2t\right)\frac{1}{(2a-1)\sqrt{1-z}}\\
				&\quad\quad\times\left(e^{(2a-1)(1-z)t}
					\HGF{2}
							{1}
							{\frac{1}{2}, a-\frac{1}{2}}
							{a+\frac{1}{2}}
							{e^{2(1-z)t}z}
				 -\HGF{2}
							{1}
							{\frac{1}{2}, a-\frac{1}{2}}
							{a+\frac{1}{2}}
							{z}
			\right).
		\end{align}
        Since
            \begin{align}
                &\frac{\d}{\d t}\left(e^{(2a-1)(1-z)t}
                    \HGF{2}
                        {1}
                        {\frac{1}{2}, a-\frac{1}{2}}
                        {a+\frac{1}{2}}
                        {e^{2(1-z)t}z}-
                    \HGF{2}
                        {1}
                        {\frac{1}{2}, a-\frac{1}{2}}
                        {a+\frac{1}{2}}
                        {z}
                    \right)\\
                &\quad=(2a-1)(1-z)e^{(2a-1)(1-z)t}\sum_{n=0}^{\infty}\left(\frac{1}{2}\right)_{n}\frac{(e^{2(1-z)t}z)^{n}}{n!}\\
                &\quad=(2a-1)\sqrt{1-z}\ e^{2(a-1)(1-z)t}\mathscr{E}\left(z,\frac{1}{2};2t\right),
            \end{align}
        it holds that
            \begin{align}
                &\frac{1}{(2a-1)\sqrt{1-z}}\left(e^{(2a-1)(1-z)t}
                    \HGF{2}
                        {1}
                        {\frac{1}{2}, a-\frac{1}{2}}
                        {a+\frac{1}{2}}
                        {e^{2(1-z)t}z}-
                    \HGF{2}
                        {1}
                        {\frac{1}{2}, a-\frac{1}{2}}
                        {a+\frac{1}{2}}
                        {z}
                    \right)\\
                &\quad=\sum_{n=0}^{\infty}\left(2^{n}\sum_{l=0}^{n}\binom{n}{l}(a-1)^{n-l}(1-z)^{n-l}E_{l}\left(z,\frac{1}{2}\right)\right)\frac{t^{n+1}}{(n+1)!}.
            \end{align}
	Thus, we have
		\begin{align}
			&P(a,z;t)\\
			&\quad=\sum_{n=0}^{\infty}\left(2^{n}\sum_{j=0}^{n}\sum_{l=0}^{j}\binom{n+1}{l+1}\binom{j}{l}
				(a-1)^{j-l}(1-z)^{j-l}E_{n-j}\left(z,\frac{1}{2}\right)E_{l}\left(z,\frac{1}{2}\right)\right)\frac{t^{n+1}}{(n+1)!},
		\end{align}
	which concludes the proof.
	\end{proof}

    \begin{rmk}
        From the calculation of the generating function used in the proof of \cref{thm:ptoE}, it can be shown that $p_{n}(0,x)=q_{n}(x)$ (for $n>-1$). In fact, the following equality holds:
            \begin{align}
                P(0,z;t)&=\mathscr{E}\left(z,\frac{1}{2};2t\right)\frac{-1}{\sqrt{1-z}}\left(e^{(z-1)t}
                    \HGF{1}
                        {0}
                        {-\frac{1}{2}}
                        {-}
                        {e^{2(1-z)t}z}-
                    \HGF{1}
                        {0}
                        {-\frac{1}{2}}
                        {-}
                        {z}
                    \right)\\
                &=\mathscr{E}\left(z,\frac{1}{2};2t\right)\frac{-1}{\sqrt{1-z}}\left(\sqrt{e^{2(z-1)t}-z}-\sqrt{1-z}\right)\\
                &=\mathscr{E}\left(z,\frac{1}{2};2t\right)-1\\
                &=Q(z;t)-1.
            \end{align}
    \end{rmk}

    We have not yet found anything like poly-Bernoulli numbers that can be associated with $p_{n}(a,x)$. Here is a fact known about $p_{n}(a,1/4)$.
    
    \begin{prop}
        The sequence $(\alpha_{n}(a))_{n\ge 0}$ defined as
            \begin{align}
                \alpha_{n}(a)\coloneqq\left(\frac{2}{3}\right)^{n}p_{n}\left(a,\frac{1}{4}\right)
            \end{align}
        satisfies the recursion
            \begin{align}
                    3\alpha_{n}(a)=2\alpha_{n-1}(a)+\sum_{l=0}^{n-1}\binom{n}{l}\alpha_{l}(a)+3a^{n},
            \end{align}
        with the initial value $\alpha_{0}(a)=1$.
    \end{prop}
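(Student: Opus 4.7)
Introduce the exponential generating function $A(t) := \sum_{n\ge 0} \alpha_n(a)\,t^n/n!$. Reading off the coefficient of $t^n/n!$ on both sides of
\[
(4 - e^t)\,A(t) = 2\int_0^t A(s)\,ds + 3\,e^{at}
\]
recovers precisely the claimed recursion (together with the base case $n=0$, under the convention $\alpha_{-1}:=0$), since the coefficient of $t^n/n!$ in $e^t A(t)$ is $\sum_{l=0}^n\binom{n}{l}\alpha_l(a)$ and in $\int_0^t A(s)\,ds$ is $\alpha_{n-1}(a)$ for $n\ge 1$. It therefore suffices to establish this functional equation.

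To do so, I will derive a closed form for the antiderivative $G(u) := \int_0^u A(s)\,ds$. The key intermediate identity is
\[
\sum_{k\ge 0} 2^k\,\zeta_{HCB}(1-k, a)\,\frac{t^k}{k!} \;=\; \Phi_{HCB}\!\left(1,\,a,\,\tfrac{e^t}{2}\right),
\]
obtained by interchanging the order of summation in the defining series (this is the same opening manoeuvre used in the proof of \cref{thm:ptoE}). Expand the right-hand side by \cref{choukika2} at $w = e^t/2$, and expand the left-hand side by inserting \cref{zenka} at $z=1/2$, which splits it into a linear combination of $P(a, 1/4;\,4t/3)$ and $Q(1/4;\,4t/3) = \mathscr{E}(1/4,1/2;\,8t/3) = e^t\sqrt{3/(4-e^{2t})}$. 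Since $p_{n-1}(a, 1/4) = (3/2)^{n-1}\alpha_{n-1}(a)$, a direct reorganisation gives $P(a, 1/4;\,4t/3) = (2/3)\int_0^{2t}A(s)\,ds$. Solving for the integral and substituting $u = 2t$ yields the closed form
\[
G(u) \;=\; \frac{3\bigl[\,e^{au} F(e^u/4) - e^{u/2} F(1/4)\,\bigr]}{(2a-1)\sqrt{4-e^u}}, \qquad F(w) := \HGF{2}{1}{\tfrac12,\,a-\tfrac12}{a+\tfrac12}{w}.
\]

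With this closed form in hand, verifying the functional equation becomes a differentiation exercise. Starting from $G(u)\sqrt{4-e^u} = \tfrac{3}{2a-1}\bigl[e^{au}F(e^u/4) - e^{u/2}F(1/4)\bigr]$, differentiating once in $u$, multiplying by $\sqrt{4-e^u}$, and rearranging, the target equation reduces (after cancelling $(2a-1)^{-1}$ and a common factor of $e^{au}$) to the scalar identity
\[
(2a-1) \;=\; (2a-1)\sqrt{1-w}\,F(w) + 2w\sqrt{1-w}\,F'(w), \qquad w = e^u/4,
\]
which is precisely \cref{eq:dHGF} after expanding the $z$-derivative, substituting $w = z^2$, and multiplying through by $\sqrt{1-w}$. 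The main obstacle throughout is careful bookkeeping of the assorted powers of $2$, $3$, $e^t$, and $\sqrt{1-w}$; once the closed form for $G$ is secured, the final verification is purely mechanical. The initial value $\alpha_0(a) = 1$ is immediate from $p_0(a,x) = 1$.
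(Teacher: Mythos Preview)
Your proof is correct and follows essentially the same route as the paper: both obtain the closed form
\[
G(u)=\frac{3}{(2a-1)\sqrt{4-e^{u}}}\Bigl(e^{au}F(e^{u}/4)-e^{u/2}F(1/4)\Bigr)
\]
for the antiderivative of the exponential generating function (the paper writes this as $\tfrac{3}{2}P(a,\tfrac14;\tfrac{2}{3}t)$ and quotes the formula for $P$ from the proof of \cref{thm:ptoE}), and then check the first-order ODE $\bigl[(4-e^{t})\tfrac{\d}{\d t}-2\bigr]y=3e^{at}$, which is exactly your functional equation $(4-e^{t})A(t)=2\int_0^t A+3e^{at}$ with $y=G$. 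The only difference is presentational: the paper cites the closed form of $P$ directly, whereas you re-derive it via $\sum_k 2^k\zeta_{HCB}(1-k,a)\,t^k/k!=\Phi_{HCB}(1,a,e^t/2)$ combined with \cref{choukika2} and \cref{zenka}; and you spell out that the ODE verification reduces to \cref{eq:dHGF}, which the paper leaves implicit.
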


    \begin{proof}
        The generating function for $\alpha_{n}(a)$ is given by 
            \begin{align}
                \sum_{n=-1}^{\infty}\alpha_{n}(a)\frac{t^{n+1}}{(n+1)!}
                &=\frac{3}{2}P\left(a,\frac{1}{4};\frac{2}{3}t\right)\\
                &=\frac{3\,e^{\frac{t}{2}}}{(2a-1)\sqrt{4-e^{t}}}\left(e^{\frac{2a-1}{2}t}
                        \HGF{2}
                            {1}
                            {\frac{1}{2}, a-\frac{1}{2}}
                            {a+\frac{1}{2}}
                            {\frac{e^{t}}{4}}-
                        \HGF{2}
                            {1}
                            {\frac{1}{2}, a-\frac{1}{2}}
                            {a+\frac{1}{2}}
                            {\frac{1}{4}}
                        \right).
            \end{align}
        Since the right-hand side satisfies the differential equation
            \begin{align}
                \left[(4-e^{t})\frac{\d}{\d t}-2\right]y=3e^{at},
            \end{align}
        the coefficients $\alpha_{n}(a)$ satisfy the desired recurrence formula.
    \end{proof}

From  \cref{zenka}, the special values of HCBS are described based on the results obtained for HLCBS.

    \begin{thm}\label{zetatokushu}
        For any $k\in{\ZZ_{\geq 0}}$ and  $a\in{\RR_{> \frac{1}{2}}}$, we have 
        \begin{align}
            \zeta_{HCB}(1-k,a)\in{\frac{\Gamma(a+1)^2}{\Gamma(2a+1)}}
                \left(
                    \QQ(a)+
                    \frac{4^{a}}{\sqrt{3}}
                        B\left(\frac{1}{4};a-\frac{1}{2},\frac{1}{2}\right)\QQ(a)
                \right),
        \end{align}
        where  $B(z;\alpha,\beta)$ is the incomplete beta function:
        \begin{align}
            B(z;\alpha,\beta):=\int_{0}^{z}x^{\alpha-1}(1-x)^{\beta-1}\d x,
        \end{align}
        for $\alpha$, $\beta$, $z\in{\CC}$ with $\mathrm{Re}(\alpha)$, 
        $\mathrm{Re}(\beta)>0$, and $0\le\mathrm{Re}(z)\le1$.
    \end{thm}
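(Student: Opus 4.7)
The plan is to specialize \cref{zenka} at $z = 1/2$ and reorganize the resulting identity into the stated $\QQ(a)$-module decomposition. Since $\zeta_{HCB}(1-k, a) = \Phi_{HCB}(1-k, a, 1/2)$, I would substitute $z = 1/2$ into \cref{eq:zenka1}, using $z^{2a} = 4^{-a}$, $1 - z^2 = 3/4$, and $\sqrt{1 - z^2} = \sqrt{3}/2$. After rewriting $\binom{2a}{a}^{-1} = \Gamma(a+1)^2 / \Gamma(2a+1)$, the factor $\Gamma(a+1)^2/\Gamma(2a+1)$ pulls out cleanly, and the remaining purely numerical prefactors collapse into rational multiples of $(2/3)^k$ (for the $p_{k-1}$-term) and $(2/3)^k/\sqrt{3}$ (for the $q_{k-1}$-term).

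The single piece of content that is not mere simplification is the conversion of the surviving ${}_2F_1\bigl(\tfrac{1}{2}, a - \tfrac{1}{2}; a + \tfrac{1}{2}; \tfrac{1}{4}\bigr)$ into a rational multiple of $4^a B\bigl(\tfrac{1}{4}; a - \tfrac{1}{2}, \tfrac{1}{2}\bigr)$. For this I would invoke the classical identity
\begin{equation*}
B(z; \alpha, \beta) = \frac{z^\alpha}{\alpha}\,{}_2F_1(\alpha, 1 - \beta; \alpha + 1; z),
\end{equation*}
obtained by expanding $(1 - x)^{\beta - 1}$ as a binomial series and integrating term-by-term. Specializing to $\alpha = a - 1/2$, $\beta = 1/2$, $z = 1/4$ and using the symmetry of ${}_2F_1$ in its upper parameters yields
\begin{equation*}
{}_2F_1\Bigl(\tfrac{1}{2}, a - \tfrac{1}{2}; a + \tfrac{1}{2}; \tfrac{1}{4}\Bigr) = \frac{(2a - 1)\,4^a}{4}\,B\Bigl(\tfrac{1}{4}; a - \tfrac{1}{2}, \tfrac{1}{2}\Bigr),
\end{equation*}
and the hypothesis $a > 1/2$ is exactly what makes both sides finite.

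Next I would observe that the recursion defining $p_n(a, x)$ has coefficients in $\ZZ[a, x]$: the factor $2((k+1-a)x + a)$ is integral in $a$ and $x$, differentiation in $x$ preserves integrality, and the forcing term $q_k(x)$ lies in $\ZZ[x]$. Starting from $p_{-1}(a, x) = 0$ and $q_{-1}(x) = 1$, induction gives $p_{k-1}(a, x) \in \ZZ[a, x]$ and $q_{k-1}(x) \in \ZZ[x]$, hence $p_{k-1}(a, 1/4) \in \QQ[a] \subset \QQ(a)$ and $q_{k-1}(1/4) \in \QQ$. Plugging these rationality facts back into the simplified formula collects the $p_{k-1}$-term into the $\QQ(a)$-summand and the $q_{k-1}$-term into the $(4^a/\sqrt{3})\,B(1/4; a - 1/2, 1/2)\,\QQ(a)$-summand of the claimed module, completing the argument.

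The main obstacle is purely bookkeeping: the interplay of $2^{k-1}$, $(3/4)^{k + 1/2}$, $\sqrt{1 - z^2}$, and the overall $1/(2a)$ must collapse into exactly the coefficients advertised above, and a misplaced power of $2$ or of $\sqrt{3}$ would break the decomposition. As a sanity check I would first verify the $k = 0$ case, where $p_{-1} = 0$ and $q_{-1} = 1$, so the assertion reduces to the $z = 1/2$ evaluation of \cref{choukika2}; this exercises every numerical factor except those produced by the recursion, making it the ideal place to catch any slip before writing the argument for general $k$.
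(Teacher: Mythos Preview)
Your proposal is correct and follows essentially the same route as the paper: specialize \cref{zenka} at $z=1/2$, rewrite the hypergeometric factor as a rational-in-$a$ multiple of $4^{a}B(1/4;a-1/2,1/2)$, and invoke the rationality of $p_{k-1}(a,1/4)$ and $q_{k-1}(1/4)$. The only cosmetic difference is that the paper converts ${}_2F_1$ to the incomplete beta via Euler's integral representation (followed by the substitution $x\mapsto x/4$), whereas you use the equivalent series identity $B(z;\alpha,\beta)=\tfrac{z^{\alpha}}{\alpha}\,{}_2F_1(\alpha,1-\beta;\alpha+1;z)$; both yield the same formula $4^{a-1}(2a-1)B(1/4;a-1/2,1/2)$.
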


	\begin{proof}
		From \cref{zenka}, if $z=1/2$, we obtain 
			\begin{align}
				&\zeta_{HCB}(1-k,a)\\
				&\quad=\frac{\Gamma(a+1)^2}{a\Gamma(2a+1)}
					\left(\frac{2}{3}\right)^{k}
					\left(
						(2a-1)p_{k-1}\left(a,\frac{1}{4}\right)
							+\frac{2}{\sqrt{3}}
							\HGF{2}
									{1}
									{\frac{1}{2}, a-\frac{1}{2}}
									{a+\frac{1}{2}}
									{\frac{1}{4}}
							q_{k-1}\left(\frac{1}{4}\right)
					\right).
			\end{align}
		In particular, since
			\begin{align}
				\HGF{2}
						{1}
						{\frac{1}{2}, a-\frac{1}{2}}
						{a+\frac{1}{2}}
						{\frac{1}{4}}
				&=\frac{\Gamma(a+\frac{1}{2})}{\Gamma(a-\frac{1}{2})\Gamma(1)}
					\int_{0}^{1}x^{a-\frac{3}{2}}
					\left(1-\frac{x}{4}\right)^{-\frac{1}{2}}\d x\\
				&=4^{a-1}(2a-1)B\left(\frac{1}{4}; a-\frac{1}{2}, \frac{1}{2}\right),
			\end{align}
		we have
			\begin{align}
				&\zeta_{HCB}(1-k,a)\\
				&\quad=\frac{(2a-1)\Gamma(a+1)^2}{a\Gamma(2a+1)}\left(\frac{2}{3}\right)^{k}
					\left(
						p_{k-1}\left(a,\frac{1}{4}\right)
						+4^{a-1}\frac{2}{\sqrt{3}}B\left(\frac{1}{4}; a-\frac{1}{2}, \frac{1}{2}\right)q_{k-1}\left(\frac{1}{4}\right)
					\right).
			\end{align}
		Since $p_{k}(a,x)$ and $q_{k}(x)$ are polynomials of rational coefficients with respect to $x$, we obtain the claim.
	\end{proof}

We show the results for some special $a$. 
If $a$ is an integer $m>0$, then 
        \begin{align}
            \zeta_{HCB}(1-k,m)\in{\QQ+\QQ\frac{\pi}{\sqrt{3}}}
        \end{align}
follows. This is a direct consequence of the following equality and \cref{thm:Lehmer2}:
    \begin{align}
        \zeta_{HCB}(1-k,a+1)
        =\zeta_{HCB}(1-k,a)-\frac{1}{\binom{2a}{a}a^{1-k}}.
    \end{align}
If $a$ is a positive half-integer greater than 1/2, then we have the following:
    \begin{coro}
        For any integers $k\geq0$ and $m\geq 1$, $\zeta_{HCB}(1-k,m+1/2)$ is in a two-dimensional $\QQ$-vector space spanned by $\pi$ and $\pi/\sqrt{3}$, i.e., 
            \begin{align}
                \zeta_{HCB}\left(1-k,m+\frac{1}{2}\right)\in{\QQ\bigl(\sqrt{3}\bigr)\pi}.
            \end{align}
    \end{coro}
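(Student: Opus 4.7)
The plan is to specialise \cref{zetatokushu} at $a = m+1/2$ and analyse each factor on the right-hand side separately. First I would handle the prefactor $\Gamma(a+1)^2/\Gamma(2a+1)$. Setting $a = m+1/2$, this becomes $\Gamma(m+3/2)^2/(2m+1)!$, and iterating $\Gamma(x+1) = x\Gamma(x)$ from $\Gamma(1/2) = \sqrt{\pi}$ shows that $\Gamma(m+3/2) = \frac{(2m+1)!}{2\cdot 4^{m}\,m!}\sqrt{\pi}$, so this prefactor is a rational multiple of $\pi$. The remaining elementary data are $4^{a} = 2\cdot 4^{m}\in\QQ$ and the specialisation $\QQ(a)|_{a=m+1/2} = \QQ$.

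The substantive step is to show that the incomplete beta value $B(1/4;\,m,\,1/2) = \int_{0}^{1/4}x^{m-1}(1-x)^{-1/2}\,\d x$ lies in $\QQ(\sqrt{3})$. I would substitute $x = \sin^{2}\theta$ to reduce the integral to $2\int_{0}^{\pi/6}\sin^{2m-1}\theta\,\d\theta$, then set $v = \cos\theta$ and use $\sin^{2m-2}\theta = (1-v^{2})^{m-1}$ to obtain
\[
B\!\left(\tfrac{1}{4};\,m,\,\tfrac{1}{2}\right) = 2\int_{\sqrt{3}/2}^{1}(1-v^{2})^{m-1}\,\d v.
\]
This is the integral of a polynomial in $v$ with rational coefficients, evaluated at the endpoints $v = 1$ and $v = \sqrt{3}/2$; since $(\sqrt{3}/2)^{2j}\in\QQ$ and $(\sqrt{3}/2)^{2j+1}\in\QQ\sqrt{3}$, the result lies in $\QQ(\sqrt{3})$.

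Combining the pieces, \cref{zetatokushu} then yields
\[
\zeta_{HCB}(1-k,\,m+1/2)\in \pi\cdot\QQ\cdot\left(\QQ + \tfrac{1}{\sqrt{3}}\,\QQ(\sqrt{3})\right).
\]
Since $1/\sqrt{3} = \sqrt{3}/3 \in \QQ(\sqrt{3})$, the inner factor collapses to $\QQ(\sqrt{3})$, so $\zeta_{HCB}(1-k,\,m+1/2)\in\pi\,\QQ(\sqrt{3}) = \QQ(\sqrt{3})\pi$, which is the claim. No step poses a genuine obstacle; the only care required is to keep track of the $\sqrt{\pi}$ contributed by each half-integer gamma value and to verify $\QQ(\sqrt{3})$-closure for the endpoint evaluations of the elementary polynomial integral.
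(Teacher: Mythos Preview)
Your proof is correct and follows the same skeleton as the paper's: specialise \cref{zetatokushu} at $a=m+\tfrac12$, observe that the gamma prefactor is a rational multiple of $\pi$, and show that the incomplete beta value $B\bigl(\tfrac14;m,\tfrac12\bigr)$ lies in $\QQ(\sqrt{3})$. The only real difference is in how this last fact is established. The paper invokes the standard recursion
\[
B(x;\alpha+1,\beta)=\frac{\alpha}{\alpha+\beta}\,B(x;\alpha,\beta)-\frac{x^{\alpha}(1-x)^{\beta}}{\alpha+\beta}
\]
together with the closed form $B(x;1,\beta)=\beta^{-1}\bigl(1-(1-x)^{\beta}\bigr)$ to reduce $B\bigl(\tfrac14;m,\tfrac12\bigr)$ to $B\bigl(\tfrac14;1,\tfrac12\bigr)=2-\sqrt{3}$, picking up correction terms in $\QQ\sqrt{3}$ along the way; you instead compute the integral directly via the substitutions $x=\sin^{2}\theta$ and $v=\cos\theta$. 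Both routes are equally elementary. Yours is slightly more self-contained, while the paper's fits into the same framework used just above for integer $a$, where the recursion terminates instead at $B\bigl(\tfrac14;\tfrac12,\tfrac12\bigr)=\pi/3$.
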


    \begin{proof}
        This can be shown by using \cref{zetatokushu} and the following relations
            \begin{align}
                B(x;\alpha+1,\beta)
                &=\frac{\alpha}{\alpha+\beta}B(x;\alpha,\beta)
                    -\frac{x^{\alpha}(1-x)^{\beta}}{\alpha+\beta},\\
                B(x;1,\beta)&=\frac{1}{\beta}\left(1-(1-x)^{\beta}\right),\\
                B\left(\frac{1}{4},\frac{1}{2},\frac{1}{2}\right)&=\frac{\pi}{3},
            \end{align}
        for any real numbers $\alpha$, $\beta>0$.
    \end{proof}

Finally, some specific examples are given.
\begin{ex}
    \begin{align}
        &\zeta_{HCB}(1, 1)=\frac{\pi}{3\sqrt{3}},
        &&\zeta_{HCB}(-3,2)=\frac{17}{6}+\frac{74}{81\sqrt{3}}\pi,\\
        &\zeta_{HCB}\left(1,\frac{3}{2}\right)=\left(-\frac{1}{2}+\frac{\sqrt{3}}{3}\right)\pi,
        &&\zeta_{HCB}\left(-2,\frac{7}{2}\right)=\left(-\frac{935}{2048}+\frac{10}{27}\sqrt{3}\right)\pi.
    \end{align}
\end{ex}

\section*{Acknowledgement} 
The authors would like to express their sincere gratitude to Professors Masanobu Kaneko and Yasuo Ohno for their helpful advice and comments. The authors also thank Professor Toshiki Matsusaka for his comments regarding the early stages of the study (in particular, when $a=1/2$). The first author was supported by WISE program (MEXT) at Kyushu University.

\end{document}